\theoremstyle{plain}
\newtheorem{theorem}{Theorem}[section]
\newtheorem{lem}[theorem]{Lemma}
\newtheorem{cor}[theorem]{Corollary}
\newtheorem{rem}[theorem]{Remark}
\numberwithin{equation}{section}
\def\N{{\mathbb N}}
\def\Z{{\mathbb Z}}
\def\ch{\mathfrak{C}}
\def\IL{\underleftarrow\lim([\tilde c_2,\tilde c_1],T)}
\def\ILQ{\underleftarrow\lim([c_2,c_1],Q)}
\def\il{\underleftarrow\lim \,}
\def\orb{\mbox{orb}}
\def\diam{\mathop\mathrm{diam}}
\def\mesh{\mathop\mathrm{mesh}}
\def\htop{h_{top}}
\def\eps{\varepsilon}
\begin{document}

\title{On isotopy of self-homeomorphisms of quadratic
inverse limit spaces}
\author{H.\ Bruin and S.\ \v{S}timac}
\thanks{S.\ \v{S}timac was supported in part by the NEWFELPRO Grant No.~24 HeLoMa, and in
	part by the Croatian Science Foundation grant IP-2014-09-2285.}

\subjclass[2000]{54H20, 37B45, 37E05}
\keywords{isotopy, inverse limit space, tent map, quadratic map}


\begin{abstract}
We prove that every self-homeomorphism on the inverse limit space of 
a quadratic map is isotopic to some power of the shift map.
\end{abstract}

\maketitle

\baselineskip=18pt

\section{Introduction}\label{sec:intro}

The two most prominent families of unimodal maps are the family of quadratic maps 
$Q_a$, $a \in [1, 4]$, and the family of tent maps $T_s$, $s \in [1, 2]$. The inverse 
limit spaces of quadratic and tent maps share a lot of common properties. For example, 
if $f$ is a map from one of these families, then $0$ is a fixed point of $f$, the 
point $\bar 0 := (\dots, 0,0,0)$ is contained in $\il ([0, 1], f)$ and is an end-point. 
The arc-component $C$ of $\il ([0, 1], f)$ which contains $\bar 0$
is a ray converging to, but (provided $a < 4$ and $s < 2$) disjoint from the inverse limit of the core 
$\il ([c_2, c_1],f)$, and $\il ([0, 1], f) = C \cup \il ([c_2, c_1],f)$, where the 
critical or turning point is denoted as $c$ and $c_k := f^k(c)$. If $c$ is periodic 
with (prime) period $N$, then $\il ([c_2, c_1], f)$ contains $N$ end-points.

The relationships between quadratic and tent maps, and between their inverse limits, are 
mostly well understood. Each quadratic map $Q_a$ with positive topological entropy 
is semi-conjugate to a tent map $T_s$ with $\log s = h_{top}(Q_a)$, and this 
semi-conjugacy collapses (pre)periodic intervals to points, \cite{MT}. If a quadratic
map is not renormalizable and does not have an attracting periodic point then this
semi-conjugacy is a conjugacy indeed. 

It is clear that if two interval maps are topologically conjugate, then their inverse 
limit spaces are homeomorphic. The effect of renormalization on the structure of the 
inverse limit is also well-understood, \cite{BaDi3}: it produces proper subcontinua that 
are periodic under the shift homeomorphism and homeomorphic with the inverse limit space
of the renormalized map.

A very interesting question is to characterize groups of homeomorphisms which act on  
inverse limits of unimodal maps. In \cite{BS} we proved that for every homeomorphism 
$h : \il ([0, 1], T_s) \to \il ([0, 1], T_s)$ there exists $R \in \Z$ such that $h$ is 
isotopic to $\sigma^R$, where $\sigma$ is the standard shift map (see also \cite{BJKK, BKRS}). 
Thus it is natural to ask the same question for the `fuller' quadratic family, which includes 
(infinitely) renormalizable maps. The answer does not follow in a straightforward way from 
\cite{BS}. Some work should be done, and this is what we have proved in this paper:
\begin{theorem}\label{main} Let  $H : \il ([0, 1], Q) \to \il ([0, 1], Q)$ 
	be a homeomorphism. Then $H$ is isotopic to $\sigma^R$ for some $R \in \Z$.	
\end{theorem}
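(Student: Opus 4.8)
The plan is to reduce Theorem~\ref{main} to the tent-map result of \cite{BS} by combining the Milnor--Thurston semi-conjugacy \cite{MT} with the renormalization structure of inverse limits \cite{BaDi3}. If $\il([0,1],Q)$ is a point or an arc (small parameters $a$) the statement is immediate, so assume otherwise and split into cases according to $\htop(Q)$ and renormalizability. The easy case is when $Q$ has positive entropy, no attracting periodic orbit, and every renormalization of $Q$ (if any) again has positive entropy: then $Q$ has no collapsible (pre)periodic interval, so the semi-conjugacy onto the tent map $T_s$ with $\log s=\htop(Q)$ is in fact a conjugacy. It intertwines the shift maps, hence carries $H$ to a self-homeomorphism of $\il([0,1],T_s)$, and \cite{BS} gives $H\simeq\sigma^R$ directly.

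The genuinely new work is for maps with a ``trivial'' renormalization --- in particular maps lying in a periodic window of positive entropy, and infinitely renormalizable maps. Consider first the finitely renormalizable situation. Now the semi-conjugacy $\pi\colon\il([0,1],Q)\to\il([0,1],T_s)$, $\log s=\htop(Q)$, is not injective: collapsing the (pre)periodic intervals of $Q$, its non-degenerate fibres form a countable, $\sigma$-invariant family of proper subcontinua --- the \emph{decorations} --- each homeomorphic to the inverse limit of the corresponding zero-entropy renormalization; in the simplest case (an attracting periodic orbit of period $N$) the decorations are arcs, the $N$ outermost of them sitting over the $N$ end-points of $\il([0,1],T_s)$ and cyclically permuted by $\sigma$. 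I would first show that this decomposition is topologically canonical --- the decorations are exactly the maximal proper subcontinua of a recognisable type at recognisable positions --- so that $H$ permutes the decorations and descends to a homeomorphism $\bar H$ of $\il([0,1],T_s)$. By \cite{BS}, $\bar H\simeq\sigma^{\bar R}$; since an isotopy preserves the finite end-point set together with its induced permutation, $H$ permutes the outermost decorations exactly as $\sigma^{\bar R}$ does, so after replacing $H$ by $\sigma^{-\bar R}\circ H$ we may assume $\bar H\simeq\mathrm{id}$ and $H$ fixes each decoration. It then remains to identify $H$ on each decoration --- the same statement for the lower-depth renormalized map --- and to reassemble, lifting the identity-isotopy on $\il([0,1],T_s)$ through the decorations; this is possible because each decoration meets the rest of the space along a small, finite, (pre)periodic set of points, across which isotopies of arcs extend. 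For finitely renormalizable $Q$ this is a finite induction on the number of positive-entropy renormalization levels.

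The main obstacle is the infinitely renormalizable case, where this induction never terminates: for every $k$ one has a topologically canonical decomposition of the core of $\il([0,1],Q)$ into $N_k$ nested subcontinua cyclically permuted by $\sigma$ with period $N_k$, $N_k\to\infty$, and $H$ preserves all of them at once, so the induced permutations form a compatible system, i.e.\ an element of $\varprojlim_k\Z/N_k\Z$. The crux is to show that this profinite ``shift amount'' comes from a single integer. This is precisely where the arc-component $C$ of $\bar0$ is indispensable: $C$ is dense in $\il([0,1],Q)$ and threads through the level-$k$ pieces in a rigid cyclic order dictated by the kneading data of $Q$, and $H(C)=C$ forces the level-$k$ shift amounts to stabilise to one $R\in\Z$ --- the amount by which $H$ advances points along $C$ near its non-compact end. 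A limiting argument over the levels then produces the isotopy between $H$ and $\sigma^R$.

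Besides this, the bulk of the ``work to be done'' is twofold: verifying that the decorations --- especially the arc-like ones --- are topologically recognisable from their ambient position in $\il([0,1],Q)$ (and, for this, that the residual tent factor $\il([0,1],T_s)$ together with its shift is recovered canonically from the decomposition); and establishing the gluing lemmas needed to promote an isotopy on $\il([0,1],T_s)$ and isotopies on the individual decorations to a global isotopy of $\il([0,1],Q)$, including its extension along the dense ray $C$.
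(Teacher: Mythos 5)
Your reduction hinges on the claim that the Milnor--Thurston collapse $\pi\colon\il ([0,1],Q)\to\il ([0,1],T_s)$ has a topologically canonical fibre structure, so that $H$ permutes all the ``decorations'' and descends to a homeomorphism $\bar H$ of $\il ([0,1],T_s)$. This is the step that fails. The semi-conjugacy collapses not only the cycle of intervals $J_0,\dots,J_{N-1}$ but every component of every preimage of $J=\bigcup_j J_j$; consequently the induced map on inverse limits has, besides the $N$ fibres $G_j$ over the end-points, countably many further non-degenerate fibres which are merely arcs sitting inside composants of $\il ([0,1],Q)$. The $G_j$ are indeed recognisable (they are the maximal proper subcontinua that are not arcs or points), so $H$ must permute them; but an arc fibre carries no intrinsic topological marking --- it is a sub-arc of a larger arc-component, indistinguishable from uncountably many nearby sub-arcs --- and a homeomorphism of $\il ([0,1],Q)$ has no reason to map such a fibre onto another fibre rather than onto a slightly longer or shorter arc. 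Hence $\bar H:=\pi\circ H\circ\pi^{-1}$ is not single-valued and the descent does not go through. This is precisely the obstruction the paper is built to avoid: Theorem~\ref{thm:cont} constructs a \emph{different} quotient map $\phi\colon\ILQ\to\IL$ which is injective off $\bigcup_j G_j$ and collapses \emph{only} the $G_j$; the arcs that Milnor--Thurston would crush are instead stretched, via the $p$-level structure of their midpoints, onto arcs of $\IL$ accumulating on the end-points $e_j$. Because the only non-degenerate fibres of $\phi$ are the recognisable continua $G_j$, the conjugated map $\tilde h=\phi\circ h\circ\phi^{-1}$ \emph{is} a well-defined homeomorphism, and only then can \cite{BS} be invoked.

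A secondary point: even granting the descent, your reassembly step (lifting an identity-isotopy on $\il ([0,1],T_s)$ through the decorations, and the profinite/limiting argument in the infinitely renormalizable case) is asserted rather than argued. The paper never needs to re-glue isotopies level by level: having obtained $R$ from $\tilde H$ and transported it back through $\phi$, it re-runs the salient-point and link-symmetric-arc machinery of \cite{BBS,BS} directly on $\il ([0,1],Q)$, with Lemma~\ref{shift} replacing \cite[Theorem~4.1]{BBS}, to produce the isotopy in one pass; the internal structure of the $G_j$ (Knaster or further renormalizable) never has to be analysed separately.
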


The paper is organized as follows. Section~\ref{sec:prelim} gives basic definitions. Section~\ref{sec:PI}
gives the major step for the isotopy result from tent maps inverse limits to quadratic
maps inverse limits. In Section~\ref{sec:I} we show how homeomorphisms act on $p$-points and prove 
our main theorem. These last proofs depend largely on the results obtained in \cite{BS}
and \cite{BBS}. Finally, in Section~\ref{sec:E} we use our main theorem to calculate topological
entropy of self-homeomorphisms on inverse limits of quadratic maps.

\section{Preliminaries}\label{sec:prelim}

Let $\N = \{ 1,2,3,\dots\}$ be the set of natural numbers and $\N_0 = \N \cup \{ 0 \}$. 
We consider two families of unimodal maps, the family of quadratic maps 
$Q_a : [0,1] \to [0,1]$, with $a \in [1, 4]$, defined as $Q_a(x) = ax(1-x)$, and the 
family of tent maps $T_s:[0,1] \to [0,1]$ with slope $\pm s$, $s \in [1, 2]$, defined 
as $T_s(x) = \min\{sx, s(1-x)\}$. Let $f$ be a map form any of these two families. The 
{\em critical} or {\em turning} point is $c := 1/2$. Write $c_k := f^k(c)$. The closed 
$f$-invariant interval $[c_2, c_1]$ is called the {\em core}.

The inverse limit space $\il ([0, 1], f)$ is the collection of all backward orbits
\[
\{ x = (\dots, x_{-2}, x_{-1}, x_0) :  f(x_{-i-1}) = x_{-i} \in [0,c_1]
\textrm{ for all } i \in \N_0 \},
\]
equipped with metric $d(x,y) = \sum_{n \le 0} 2^n |x_n - y_n|$ and {\em induced}, or 
{\em shift homeo\-morphism}
\[
\sigma(x) :=  \sigma_{f}(\dots, x_{-2}, x_{-1}, x_0) = (\dots, x_{-2}, x_{-1}, x_0, f(x_0)).
\]
Let $\pi_k: \il ([0, 1], f) \to [0, c_1]$, $\pi_k(x) = x_{-k}$ be the $k$-th projection 
map. For any point $x \in \il ([0, 1], f)$, the {\em composant} of $x$ in $\il ([0, 1], f)$ is 
the union of all proper subcontinua of $\il ([0, 1], f)$ containing $x$, and the 
{\em arc-component} of $x$ in $\il ([0, 1], f)$ is the union of all arcs in $\il ([0, 1], f)$ 
containing $x$.

We review some of the main tools introduced in \cite{BBS} and which are necessary here as 
well. We define {\em $p$-points} as those points
$x = (\dots, x_{-2}, x_{-1}, x_0) \in \il ([0, 1], f)$  such that $x_{-p-k} = c$ for some
$k \in \N_0$. The supremum $k$ of the set of integers with this property is called the {\em $p$-level} of
$x$, $L_p(x) := k$. Note that $k$ can be $\infty$, and this will happen if, for example, the turning point is periodic, say of period $N$. In this case the 
corresponding inverse limit will have $N+1$ end-points: One is $\bar 0$ 
and the others are $p$-points with $p$-level $\infty$ for every $p$.

Among the $p$-points of $C$ there are special ones, called {\em salient}, which are 
center points of symmetries in $C$. Homeomorphisms preserve these symmetries to such an extent 
that it is possible to prove that salient points map close to salient points.

We call a $p$-point $y \in C$ \emph{salient} if $0 \le L_p(x) < L_p(y)$ for every $p$-point
$x \in (\bar 0 , y)$. Let $(s_p^i)_{i \in \N}$ be the sequence of all salient $p$-points of 
$C$, ordered such that $s_p^i \in (\bar 0 , s_p^{i+1})$ for all $i \ge 1$.
Since by definition $L_p(s_p^i) > 0$, for all $i \ge 1$, we have $L_p(s_p^1) = 1$. Also, since
$s_p^i = \sigma^{i-1}(s_p^1)$, we have $L_p(s_p^i) = i$, for every $i \in \N$. Therefore, for 
every $p$-point $x$ of $\il ([0, 1], f)$ with $L_p(x) \ne 0$, there exists a unique salient 
$p$-point $s_p^k$ such that $L_p(x) = L_p(s_p^k) = k$. Note that the salient $p$-points depend 
on $p$: if $p \ge q$, then the salient $p$-point $s_p^i$ equals the salient $q$-point 
$s_q^{i+p-q}$.

A continuum is {\em chainable} if for every $\eps > 0$, there is a cover 
$\{ \ell^1, \dots , \ell^n\}$ of open sets (called {\em links}) of diameter less than $\eps$ such that 
$\ell^i \cap \ell^j \neq \emptyset$ if and only if $|i-j| \le 1$. Such a cover is called a 
{\em chain}. Clearly the interval $[0,c_1]$ is chainable. Throughout, we will use sequence of 
chains $\ch_p$ of $\il ([0, 1], f)$ satisfying the following properties:
\begin{enumerate}
	\item there is a chain $\{ I^1_p, \dots , I^n_p \}$ of $[0,c_1]$ such that
	$\ell^j_p := \pi_p^{-1}(I^j_p)$ are the links of $\ch_p$;
	\item each point $x \in \bigcup_{i=0}^p f^{-i}(c)$ is a boundary point of some link $I^j_p$;
	\item for each $i$ there is $j$ such that $f(I^i_{p+1}) \subset I^j_p$.
\end{enumerate}
If $\max_j |I^j_p| < \eps s^{-p}/2$ then 
$\mesh(\ch_p) := \max\{ \diam(\ell_p) :\ell_p \in \ch_p\} < \eps$, which shows that 
$\il ([0, 1], f)$ is indeed chainable. Condition (3) ensures that $\ch_{p+1}$ \emph{refines}
$\ch_p$ (written $\ch_{p+1} \preceq \ch_p$).

Note that all $p$-points of $p$-level $k$ belong to the same link of $\ch_p$. (This follows
by property (1) of $\ch_p$, because $L_p(x) = L_p(y)$ implies $\pi_p(x) = \pi_p(y)$.) 
Therefore, every link of $\ch_p$ which contains a $p$-point of $p$-level $k$, contains also 
the salient $p$-point $s_p^k$.

Let $\ell^0, \ell^1, \dots, \ell^k$ be those links in $\ch_p$ that are successively visited 
by an arc $A \subset C$ (hence $\ell^i \neq \ell^{i+1}$, $\ell^i \cap \ell^{i+1} \neq \emptyset$ 
and $\ell^i = \ell^{i+2}$ is possible if $A$ turns in $\ell^{i+1}$). We call the arc $A$ 
{\em $p$-link-symmetric} if $\ell^i = \ell^{k-i}$ for $i = 0, \dots, k$, and {\em maximal 
$p$-link-symmetric} if it is $p$-link-symmetric and there is no $p$-link-symmetric arc $B \supset A$ 
and passing through more links than $A$. In any of these cases, $k$ is even and the link $\ell^{k/2}$ 
is called the {\em central link} of $A$.

For $x \in X$, define the {\em itinerary} $i(x) = \{i_n\}_{n \in Z}$ as the sequence where
$$
i_n(x) = \begin{cases}
0 & x_n \le c, \\
1 & x_n \ge c,
\end{cases}
$$
and if $x_n = c$, we write $i_n(x) = \substack{0\\1}$.

\section{Pseudo-isotopy}\label{sec:PI}

Let $Q$ be a quadratic map of entropy $h_{top}(Q) > \frac12\log 2$, which is 
renormalizable, but  (due to the entropy restraint) of period $N > 2$. Let 
$\{J_j\}_{j=0}^{N-1}$ be the periodic cycle of intervals numbered so that 
$c \in J_0$ and $Q(J_i) = J_{i+1 \pmod N}$. Denote $J = \bigcup_{j=0}^{N-1} J_j$.
Also assume that $J_i = [p_i, \hat p_i]$, where $p_i$ is $N$-periodic and 
$Q^N(\hat p_i) = p_i$. Let $T := T_s$ be the semiconjugate tent map with 
$s = \exp(h_{top}(Q))$. Then $T$ has an $N$-periodic critical point.

Let $X := \ILQ$ and $\tilde X := \IL$. Let 
$G_j := \{ x \in X : \pi_k(x) \in J_{j-k \pmod N}, k \in \N_0 \}$, 
$j \in \{ 0, \dots , N-1 \}$. These are the (maximal) proper subcontinua of $X$ that 
are not arcs or points. On the other hand, all proper subcontinua of 
$\tilde X$ are arcs and points and $\tilde X$ has $N$ endpoints $e_j$, 
where $\tilde\pi_0(e_j) = \tilde c_j$. Here $\pi_i:X \to [c_2,c_1]$ and 
$\tilde \pi_i : \tilde X \to [\tilde c_2, \tilde c_1]$ are the coordinate 
projections.

There is a unique arc-component $Z_j$ of $X \cap \pi_0^{-1}(J_j)$ that 
compactifies exactly on $G_j$. This is a ray, and we can extent it on
one side with an arc $Z^*_j$ such that $\pi_0(Z^*_j) = [r_{j,0}, p_j]$
(where the point $r_{j,0}$ close to $p_j$ is chosen below).

\begin{theorem}\label{thm:cont}
There exists a continuous onto map $\phi : \ILQ \to \IL$ such that 
$\phi(G_j) = e_j$ and $\phi$ is one-to-one on 
$\ILQ \setminus \bigcup_{j=0}^{N-1} G_j$.
\end{theorem}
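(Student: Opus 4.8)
The plan is to build $\phi$ as the inverse limit of the semiconjugacy between $Q$ and $T$ on the interval, suitably adapted to the cores. Recall from \cite{MT} that there is a monotone continuous surjection $h : [c_2, c_1] \to [\tilde c_2, \tilde c_1]$ with $h \circ Q = T \circ h$, and that $h$ collapses exactly the (pre)periodic intervals coming from the renormalization to points; in particular each $J_j$ is collapsed to the critical value $\tilde c_j$ of $T$ (with $\tilde c_0 = \tilde c$), while $h$ is injective off $\bigcup_i Q^{-i}(J)$. Define $\phi : X \to \tilde X$ coordinatewise by $\phi(x)_{-k} = h(x_{-k})$; the intertwining relation guarantees that $\phi(x)$ is a genuine backward orbit under $T$, and continuity and surjectivity of $\phi$ follow from those of $h$ together with the standard fact that an inverse limit of surjections is surjective. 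Since $h$ is monotone and not locally constant outside the collapsed intervals, $\phi$ is onto.

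Next I would check the two required properties. For $\phi(G_j) = e_j$: if $x \in G_j$ then by definition $\pi_k(x) = x_{-k} \in J_{j - k \bmod N}$ for all $k$, so $h(x_{-k}) = \tilde c_{j-k \bmod N}$, which is exactly the backward $T$-orbit defining the endpoint $e_j$ (note $\tilde\pi_0(e_j) = \tilde c_j$). Hence $\phi$ is constant on each $G_j$ with value $e_j$. For injectivity off $\bigcup_j G_j$: suppose $x \neq y$ in $X$ with $\phi(x) = \phi(y)$. Then $h(x_{-k}) = h(y_{-k})$ for all $k$, and since $x \neq y$ there is some $k$ with $x_{-k} \neq y_{-k}$; then $x_{-k}, y_{-k}$ lie in a common fiber of $h$, i.e.\ in a common (pre)periodic interval, which after pulling back along $Q$ means that for that coordinate the orbit sits inside $\bigcup_i Q^{-i}(J)$. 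The key point — and the main obstacle — is to upgrade this from "some coordinate lands in the collapsed set" to "$x$ and $y$ both lie in some $G_j$." This requires a careful bookkeeping argument: if $x_{-k}$ and $x_{-k-1}$ both lie in the (closed, $Q$-invariant up to the cycle) set $J$ then so do all further preimages, because $Q^{-1}(J) \cap [c_2,c_1]$ meets $[c_2,c_1] \setminus J$ only in the preperiodic pieces, and one must track that the fiber of $h$ through $x_{-k}$ being nondegenerate forces $x_{-k} \in \Int J_{j-k}$ for large enough $k$, hence $x \in G_{j}$ for the appropriate $j$; the same for $y$, and matching the indices $j$ gives $\phi(x) = \phi(y) = e_j$ with $x, y \in G_j$, contradicting $x \notin \bigcup_j G_j$.

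A clean way to organize the obstacle is to argue by contradiction on the \emph{first} coordinate $-k_0$ (in absolute value smallest) at which $x_{-k_0}$ lies in a nondegenerate $h$-fiber: this fiber is a maximal interval of constancy of $h$, necessarily one of the renormalization intervals or an iterated preimage thereof, and then the dynamics of $Q$ on $J$ (where $Q$ permutes the $J_j$ cyclically) together with property (3) of the chains $\ch_p$ forces all deeper coordinates into $J$ as well, so $x \in \pi_0^{-1}(J_{j})$ and in fact $x \in G_j$ by maximality of $G_j$ among non-arc subcontinua. One subtlety to handle with care: on the ray $Z_j$ and its extension $Z^*_j$, points project into $J_j$ for the zeroth coordinate but \emph{not} for all coordinates, so these are genuinely not in $G_j$ and $\phi$ must be injective there — this is automatic from the above since such points have all but finitely many coordinates outside $J$, so the first-coordinate-in-a-fiber argument terminates. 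I would close by remarking that continuity of $\phi$ at the $G_j$ (the only places where preimages are not singletons) follows from uniform continuity of $h$ and the fact that $\diam$ of the $J_j$ and their preimages under $Q^i$ is controlled, so small neighborhoods of $G_j$ map into small neighborhoods of $e_j$.
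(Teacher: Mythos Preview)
Your approach---defining $\phi$ coordinatewise via the Milnor--Thurston semiconjugacy $h$---does not give the required injectivity on $X \setminus \bigcup_j G_j$, and this is not a bookkeeping issue but a genuine failure. The non-degenerate fibers of $h$ are not just the $J_j$: every iterated preimage component $K \subset Q^{-m}(J_j)$ is also collapsed by $h$ to a single point. Now pick any such $K$ with $K \cap J = \emptyset$ (these exist whenever $N>1$). Since the two $Q$-preimages of a point outside the critical orbit are again outside the critical orbit, one can choose a backward orbit of non-degenerate fibers $(K_l)_{l\ge l_0}$ with $K_{l_0}=K$, $Q|_{K_{l+1}}:K_{l+1}\to K_l$ a monotone bijection, and $K_l\cap J=\emptyset$ for all $l\ge l_0$. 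The set $\{x\in X: x_{-l}\in K_l \text{ for all } l\ge l_0\}$ is then a non-degenerate arc in $X$ (it is homeomorphic to $K_{l_0}$ via $\pi_{l_0}$) on which your $\phi$ is constant, yet every point of this arc has $x_{-l}\notin J$ for all large $l$ and hence lies in no $G_j$. Thus your $\phi$ is not one-to-one off $\bigcup_j G_j$.

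Your argument goes wrong at the sentence ``one must track that the fiber of $h$ through $x_{-k}$ being nondegenerate forces $x_{-k}\in\Int J_{j-k}$ for large enough $k$'': this implication is simply false, as the example above shows. The paper's proof avoids this by \emph{not} using the interval semiconjugacy at all. Instead it builds $\phi$ by hand on arc-components $W$ of $\pi_0^{-1}([r_{j,0},\hat p_j])$: the portion of $W$ that projects into $J_j$ is not collapsed but is unfolded and stretched homeomorphically onto an arc in $\tilde X$ near $e_j$, with the precise target depending on the maximal $p$-level $\Lambda(W)$ along $W$. This is exactly what makes the paper's map injective where yours is not, and it is why the paper needs the auxiliary sequences $r_{j,k}$, $\tilde r_{j,k}$ and the case analysis on $\Lambda$.
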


\begin{proof}
For the quadratic map, let $(r_{j,k})_{k \in \N_0}$ be a monotone sequence 
of points such that $r_{j,k}$ belong to a single component of 
$[c_2,c_1] \setminus J$, and $r_{j,k} \to p_j$ (see Figure~\ref{fig:r}).

\begin{figure}[ht]
\begin{center}
\begin{tikzpicture}[scale=0.9]
 \draw[-, draw=black] (0,1) -- (3,1); \draw[-, draw=black] (3,0.9) -- (3,1.1);
\draw[-, draw=black] (0,0.9) -- (0,1.1);  \node at (3,0.5) {\tiny $p_2$};
\draw[-, draw=black] (1,0.9) -- (1,1.1);  \node at (1,0.5) {\tiny $c_2$};
  \node at (0,0.5) {\tiny $\hat p_2$}; \node at (1.5,1.3) {\tiny $J_2$};
\draw[-, draw=black] (10,0.9) -- (10,1.1);  \node at (10,0.5) {\tiny $p_0$};
\draw[-, draw=black] (6,0.9) -- (6,1.1);  \node at (6,0.5) {\tiny $\hat p_0$};
 \draw[-, draw=black] (6,1) -- (10,1); \draw[-, draw=black] (8,0.9) -- (8,1.1);
  \node at (8,0.5) {\tiny $c$}; \node at (8,1.3) {\tiny $J_0$};
\draw[-, draw=black] (13,0.9) -- (13,1.1);  \node at (13,0.5) {\tiny $p_1$};
  \draw[-, draw=black] (13,1) -- (16,1); \draw[-, draw=black] (16,0.9) -- (16,1.1);
\draw[-, draw=black] (15,0.9) -- (15,1.1);  \node at (15,0.5) {\tiny $c_1$};
  \node at (16,0.5) {\tiny $\hat p_1$}; \node at (14.5,1.3) {\tiny $J_1$}; 
 \node at (4,1) {\tiny $\bullet$}; \node at (4,0.5) {\tiny $r_{2,0}$};
 \node at (3.6,1) {\tiny $\bullet$}; 
 \node at (3.35,1) {\tiny $\bullet$}; 
 \node at (3.15,1) {\tiny $\bullet$}; 
 \node at (11,1) {\tiny $\bullet$}; \node at (11,0.5) {\tiny $r_{0,0}$};
 \node at (10.6,1) {\tiny $\bullet$}; 
 \node at (10.35,1) {\tiny $\bullet$}; 
 \node at (10.15,1) {\tiny $\bullet$}; 
 \node at (12,1) {\tiny $\bullet$}; \node at (12,0.5) {\tiny $r_{1,0}$};
 \node at (12.4,1) {\tiny $\bullet$}; 
 \node at (12.65,1) {\tiny $\bullet$}; 
 \node at (12.85,1) {\tiny $\bullet$}; 
\end{tikzpicture}
\caption{The intervals $J_j$ with sequences $r_{j,k} \to p_j$.}
\label{fig:r}
\end{center}
\end{figure}
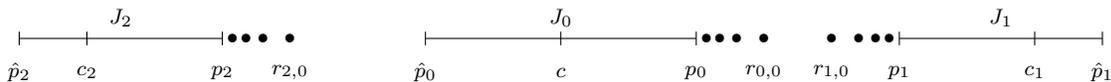 

Without loss of generality, we can set $Q(r_{j,k}) = r_{j+1,k}$ for $0 \leq j < N-1$ 
and $Q(r_{N-1,k+1}) = r_{0,k}$ for $k \geq 0$. This means that the sequence 
$(r_{j,k} : j = 0, \dots N-1, k \in \N_0)$, starting from $r_{0,0}$, forms a single 
backward orbit.

Similarly, for the tent map $T$, for fixed $0 \leq j < N$, let $(\tilde r_{j,k})_{k \in \N_0}$ be a monotone sequence 
of points such that $\tilde r_{j,k}$ belong to a single component of 
$[\tilde c_2,\tilde c_1] \setminus \orb_T(\tilde c)$, and $\tilde r_{j,k} \to \tilde c_j$. 
Again we set $T(\tilde r_{j,k}) = \tilde r_{j+1,k}$ for $0 \leq j < N-1$, and 
$T(\tilde r_{N-1,k+1}) = \tilde r_{0,k}$ for $k \geq 0$. 

Note that if $x \in X$ is such that $\pi_n(x) \notin J$ for all $n \in \Z$, then there is 
a unique point $\tilde x \in \tilde X$ such that $x$ and $\tilde x$ have the same itinerary.
Without loss of generality, we can indeed assume that $\orb(r_{0,0}) \cap J = \emptyset$, 
and then indeed choose $\tilde r_{0,0}$ with the same itinerary as $r_{0,0}$. Then $r_{j,k}$ 
and $\tilde r_{j,k}$ have the same itinerary for every $j,k$.

Now take $x \in X$. Depending on whether $\pi_0(x) \in \bigcup_j (r_{j,0}, \hat p_j)$ or not, 
and on whether $G_j$ are the Knaster continua or not, we have different algorithms to define $\phi(x)$.

{\bf (1) The case $\pi_0(x) \notin \bigcup_j (r_{j,0}, \hat p_j)$.}
Suppose that $x$ belongs to a component $W$ of 
$X \setminus \pi_0^{-1} (\, \bigcup_j (r_{j,0}, \hat p_j))$. There is a unique component $\tilde W$ 
of $\tilde X \setminus \tilde\pi_0^{-1}(\, \bigcup_j (\tilde r_{j,0} , \tilde c_j) )$ which has the 
same backward itinerary as $W$. Define $\phi:W \to \tilde W$ such that 
$\tilde\pi_0 \circ \phi \circ \pi^{-1}$ is an affine map from $\pi_0(W)$ to $\tilde\pi_0(\tilde W)$.

{\bf (2) The case $\pi_0(x) \in \bigcup_j (r_{j,0}, \hat p_j)$, non-Knaster construction.}
Assume that $\pi_0(x) \in [r_{j,0}, \hat p_j]$ for some $j$, and let $W$ be the component of 
$\pi_0^{-1}( [r_{j,0}, \hat p_j])$ containing $x$, and $V \subset W$ be the corresponding component 
of $\pi_0^{-1}( [p_j, \hat p_j])$. Let 
$$
\Lambda = \sup\{ L(y) : y \in V \text{ is a $p$-point}\},
$$
where $L := L_p$ is the $p$-level of a $p$-point. The definition of $\phi|_W$ will
depend on the value of $\Lambda$ (see Figure~\ref{fig:phinK}).
\begin{enumerate}[(2.1)]
\item
If $\Lambda = 0$, then $\pi_0:W \to [r_{j,0}, \hat p_j]$ is injective. Let $\tilde W \subset \tilde X$ 
be the unique component of $\tilde\pi_0^{-1}([\tilde r_{j,0}, \tilde c_j])$ that has the same backward 
itinerary as $W$. Define $\phi:W \to \tilde W$ to be the homeomorphism such that 
$\tilde\pi_0 \circ \phi \circ \pi^{-1}$ is the affine map from $[r_{j,0}, \hat p_j] = \pi_0(W)$ onto 
$[\tilde r_{j,0}, \tilde c_j] = \tilde\pi_0(\tilde W)$ so that 
$\tilde\pi_0 \circ \phi \circ \pi^{-1}(r_{j,0}) = \tilde r_{j,0}$.
\item 
If $0 < \Lambda < \infty$, then let $m \in W$ be the $p$-point of level $L(m) = \Lambda$. In fact, $m$ 
is the unique point with this property: it is the midpoint of $W$. Furthermore, there are two finite 
sequences of points $\{ v_k\}_{k=1}^\lambda$ and $\{ \hat v_k\}_{k=1}^\lambda$, $\lambda \leq \Lambda$,
inside $V$ such that
$\partial V = \{ v_1, \hat v_1 \}$, $v_\lambda = \hat v_\lambda = m$, $v_{k+1}$ is the $p$-point in 
$[v_k, m]$ such that $L(v_{k+1}) > L(v_k)$ and no point in $(v_k, v_{k+1})$ has level larger than 
the level of $v_k$, and $\hat v_{k+1}$ is the $p$-point in $[\hat v_k, m]$ such that 
$L(\hat v_{k+1}) > L(\hat v_k)$ and no point in $(\hat v_k, \hat v_{k+1})$ has level larger than 
the level of $\hat v_k$.
(Note that if $V$ and $V'$ are two different components of $\pi^{-1}_0([p_j, \hat p_j])$
with levels of their midpoints satisfying $\Lambda < \Lambda'$,
then $\lambda < \lambda'$, and in fact, the levels of the points $v'_k \in V'$ are a superset
of the levels of the points $v_k \in V$.)
	
Let $\tilde W \subset \tilde X$ be the component of $\tilde\pi_0^{-1}([\tilde r_{j,0}, \tilde c_j]$ 
so that the two endpoints of $\tilde W$ have the same itineraries as the corresponding endpoints of $W$.
Note that the midpoint $\tilde m$ of $\tilde W$ has the level $L(\tilde m) = \Lambda$.

Define $\phi:W \to \tilde W$ 
to be the homeomorphism such that
\begin{enumerate}[(a)]
\item $\tilde\pi_0 \circ \phi \circ \pi^{-1}$ maps $[r_{j,0}, p_j]$ affinely onto 
$[\tilde r_{j,0}, \tilde r_{j,1}]$;
\item $[v_k, v_{k+1}]$ and $[\hat v_k, \hat v_{k+1}]$ are mapped to the two components of 
$\tilde\pi_0^{-1}([\tilde r_{j,k}, \tilde r_{j,k+1}]) \cap \tilde W$, for $0 \leq k < \lambda-1$.
\item $[v_{\lambda-1}, \hat v_{\lambda-1}]$ is mapped onto 
$\tilde\pi_0^{-1}([\tilde r_{j,\lambda-1}, \tilde c_j]) \cap \tilde W$ in such a way that 
$\tilde\pi_0 \circ \phi(m) = \tilde c_j$.
\end{enumerate}

\item If $\Lambda = \infty$ and $x \in Z_j$ (but $Z_j \cap G_j = \emptyset$ since we are in the 
non-Knaster case), then $V = Z_j \subset W$ is a ray, and we can define an infinite sequence
$\{ v_k \}_{k=1}^\infty$ so that $\pi_0(v_1) = p_j$ and $v_{k+1}$ is the $p$-point on 
$Z_j \setminus [v_1, v_k]$ such that $L(v_{k+1}) > L(v_k)$ and no $p$-point on $(v_k, v_{k+1})$ 
has the level larger than the level of $v_k$.

Let $\tilde W$ be the component of $\tilde\pi_0^{-1}([\tilde r_{j,0}, \tilde c_j))$ having $e_j$ 
as boundary point. Define $\phi:W \to \tilde W$ to be the homeomorphism such that
\begin{enumerate}[(a)]
\item $\tilde\pi_0 \circ \phi \circ \pi^{-1}$ maps $[r_{j,0}, p_j]$ affinely onto 
$[\tilde r_{j,0}, \tilde r_{j,1}]$;
\item $[v_k, v_{k+1}]$ is mapped to 
$\tilde\pi_0^{-1}([\tilde r_{j,k}, \tilde r_{j,k+1}]) \cap \tilde W$, for $k \geq 1$.
\end{enumerate}
\item $\phi(x) = e_j$ for every $x \in G_j$.
\end{enumerate}

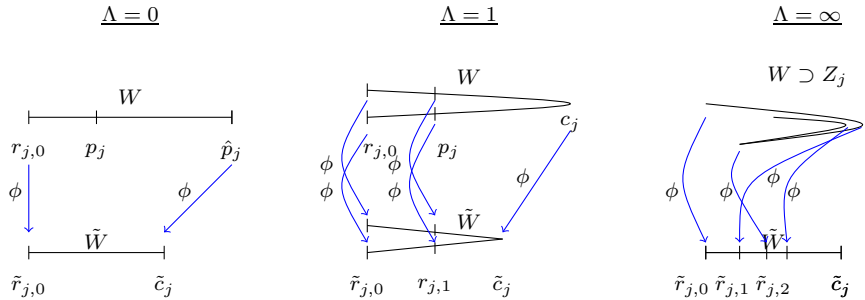
\begin{figure}[ht]
\begin{center}
\begin{tikzpicture}[scale=0.9]
\node at (1.5,4.5) {\tiny $\underline{\Lambda = 0}$};
\node at (1.5,3.3) {\tiny $W$};\node at (1,1.2) {\tiny $\tilde W$};
\draw[-, draw=black] (0,3) -- (3,3); \draw[-, draw=black] (3,2.9) -- (3,3.1);
\draw[-, draw=black] (0,2.9) -- (0,3.1);  \node at (0,2.5) {\tiny $r_{j,0}$};
\draw[-, draw=black] (1,2.9) -- (1,3.1);  
\node at (1,2.5) {\tiny $p_j$}; \node at (3,2.5) {\tiny $\hat p_j$}; 
\draw[-, draw=black] (3,2.9) -- (3,3.1);  
  \node at (0,0.5) {\tiny $\tilde r_{j,0}$};
\draw[-, draw=black] (0,1) -- (2,1); \draw[-, draw=black] (2,0.9) -- (2,1.1);
\draw[-, draw=black] (0,0.9) -- (0,1.1);  \node at (2,0.5) {\tiny $\tilde c_j$};
\draw[->, draw=blue] (0,2.3) -- (0,1.3);  \node at (-0.2,1.9) {\tiny $\phi$};
\draw[->, draw=blue] (3,2.3) -- (2,1.3);   \node at (2.3,1.9) {\tiny $\phi$};
\node at (6.5,4.5) {\tiny $\underline{\Lambda = 1}$};
\node at (6.5,3.6) {\tiny $W$};\node at (6.5,1.5) {\tiny $\tilde W$};
\draw[-] (5,3) .. controls (9, 3.2) .. (5,3.4);
\draw[-, draw=black] (5,2.9) -- (5,3.1); 
\draw[-, draw=black] (5,3.3) -- (5,3.5);  \node at (5.2,2.5) {\tiny $r_{j,0}$};
\draw[-, draw=black] (6,2.95) -- (6,3.15);  \draw[-, draw=black] (6,3.25) -- (6,3.45);  
\node at (6.2,2.5) {\tiny $p_j$}; \node at (8,2.9) {\tiny $c_j$}; 
 \node at (5,0.5) {\tiny $\tilde r_{j,0}$};
\draw[-, draw=black] (5,1) -- (7,1.2); \draw[-, draw=black] (5,1.4) -- (7,1.2); 
\draw[-, draw=black] (5,0.9) -- (5,1.1);\draw[-, draw=black] (5,1.3) -- (5,1.5);
\node at (7,0.5) {\tiny $\tilde c_j$};
\draw[-, draw=black] (6,0.98) -- (6,1.15); \draw[-, draw=black] (6,1.25) -- (6,1.42);
\node at (6,0.5) {\tiny $r_{j,1}$};
\draw[->, draw=blue] (5,2.75) .. controls (4.5, 2) .. (5,1.15); \node at (4.4,1.9) {\tiny $\phi$};
\draw[->, draw=blue] (5,3.25) .. controls (4.5, 2.4) .. (5,1.55); \node at (4.4,2.3) {\tiny $\phi$};
\draw[->, draw=blue] (8,2.8) -- (7,1.3);   \node at (7.3,2.1) {\tiny $\phi$}; 
\draw[->, draw=blue] (6,2.9) .. controls (5.5, 2) .. (6,1.15); \node at (5.4,1.9) {\tiny $\phi$};
\draw[->, draw=blue] (6,3.25) .. controls (5.5, 2.4) .. (6,1.55); \node at (5.4,2.3) {\tiny $\phi$};
\node at (11.5,4.5) {\tiny $\underline{\Lambda = \infty}$};
\node at (11.5,3.6) {\tiny $W \supset Z_j$};\node at (11,1.2) {\tiny $\tilde W$};
\draw[-] (10.5,2.6) .. controls (13, 2.9) .. (10,3.2);
\draw[-] (10.5,2.6) .. controls (12.5, 2.9) .. (11,3);
\draw[->, draw=blue] (10,3) .. controls (9.55, 2.1) .. (10,1.15); \node at (9.5,1.9) {\tiny $\phi$};
\draw[->, draw=blue] (12.3,2.85) .. controls (10.5, 2.1) .. (10.5,1.15); \node at (11,2.1) {\tiny $\phi$};
\draw[->, draw=blue] (10.5,2.5) .. controls (10.3, 2.1) .. (10.9,1.15); \node at (10.3,1.9) {\tiny $\phi$};
\draw[->, draw=blue] (12.1,2.85) .. controls (11.1, 2.1) .. (11.2,1.15); \node at (11.3,1.9) {\tiny $\phi$};
\draw[-, draw=black] (10,1) -- (12,1); \draw[-, draw=black] (12,0.9) -- (12,1.1);
\draw[-, draw=black] (10,0.9) -- (10,1.1);  \node at (12,0.5) {\tiny $\tilde c_j$};
\node at (9.8,0.5) {\tiny $\tilde r_{j,0}$};
\draw[-, draw=black] (10.5,0.9) -- (10.5,1.1); \node at (10.4,0.5) {\tiny $\tilde r_{j,1}$};
\draw[-, draw=black] (10.9,0.9) -- (10.9,1.1); \node at (11,0.5) {\tiny $\tilde r_{j,2}$};
\draw[-, draw=black] (11.2,0.9) -- (11.2,1.1); 
\draw[-, draw=black] (10,1) -- (12,1); \draw[-, draw=black] (12,0.9) -- (12,1.1);
\draw[-, draw=black] (10,0.9) -- (10,1.1);  \node at (12,0.5) {\tiny $\tilde c_j$};
\end{tikzpicture}
\caption{The non-Knaster case: the arcs $W$ and their images under $\phi$.
The labels refer to the $\pi_0$-images of the points.}
\label{fig:phinK}
\end{center}
\end{figure} 

{\bf (3) The case $\pi_0(x) \in \bigcup_j (r_{j,0}, \hat p_j)$, Knaster construction.}
Now we adapt the construction for the case that the renormalization $Q^N|_{J_0}$ is a full unimodal 
map (i.e., $G_j$ is the Knaster continuum). In this case $Z_j \subset G_j$ and the construction of 
$\phi:W \to \tilde W$ for item (2.1) remains the same. Item (2.2) is changed into (3.2) below, and items 
(2.3) and (2.4) are combined into item (3.4), 
\begin{itemize}
\item[(3.2)]
With $W, \tilde W$ and sequence $\{ v_k\}_{k=1}^\lambda$ and $\{ \hat v_k\}_{k=1}^\lambda$ with 
$v_\lambda = \hat v_\lambda = m$ as before, define $\phi:W \to \tilde W$ to be the homeomorphism such 
that
\begin{enumerate}[(a)]
\item $\tilde\pi_0 \circ \phi \circ \pi^{-1}$ maps $[r_{j,0}, p_j]$ affinely onto 
$[\tilde r_{j,0}, \tilde r_{j,\lambda + 1}]$;
\item $[v_k, v_{k+1}]$ and $[\hat v_k, \hat v_{k+1}]$ are mapped to the two components of 
$\tilde\pi_0^{-1}([\tilde r_{j,\lambda+k}, \tilde r_{j,\lambda+k+1}]) \cap \tilde W$,
for $1 \leq k < \lambda-1$.
\item $[v_{\lambda-1}, \hat v_{\lambda-1}]$ is mapped onto 
$\tilde\pi_0^{-1}([\tilde r_{j,2\lambda-1}, \tilde c_j]) \cap \tilde W$ in such a way that 
$\tilde\pi_0 \circ \phi(m) = \tilde c_j$ (see Figure~\ref{fig:phiK}).
\end{enumerate}

\begin{figure}[ht]
\begin{center}
\begin{tikzpicture}[scale=0.9]
\node at (1.5,4.5) {\tiny $\underline{\Lambda = 0}$};
\node at (1.5,3.3) {\tiny $W$};\node at (1,1.2) {\tiny $\tilde W$};
\draw[-, draw=black] (0,3) -- (3,3); \draw[-, draw=black] (3,2.9) -- (3,3.1);
\draw[-, draw=black] (0,2.9) -- (0,3.1);  \node at (0,2.5) {\tiny $r_{j,0}$};
\draw[-, draw=black] (1,2.9) -- (1,3.1);  
\node at (1,2.5) {\tiny $p_j$}; \node at (3,2.5) {\tiny $\hat p_j$}; 
\draw[-, draw=black] (3,2.9) -- (3,3.1);  
  \node at (0,0.5) {\tiny $\tilde r_{j,0}$};
\draw[-, draw=black] (0,1) -- (2,1); \draw[-, draw=black] (2,0.9) -- (2,1.1);
\draw[-, draw=black] (0,0.9) -- (0,1.1);  \node at (2,0.5) {\tiny $\tilde c_j$};
\draw[->, draw=blue] (0,2.3) -- (0,1.3);  \node at (-0.2,1.9) {\tiny $\phi$};
\draw[->, draw=blue] (3,2.3) -- (2,1.3);   \node at (2.3,1.9) {\tiny $\phi$};
\node at (6.5,4.5) {\tiny $\underline{\Lambda = 1}$};
\node at (6.5,3.6) {\tiny $W$};  
\draw[-] (5,3) .. controls (9, 3.2) .. (5,3.4);
\draw[-, draw=black] (5,2.9) -- (5,3.1); 
\draw[-, draw=black] (5,3.3) -- (5,3.5);  \node at (5.2,2.5) {\tiny $r_{j,0}$};
\draw[-, draw=black] (6,2.95) -- (6,3.15);  \draw[-, draw=black] (6,3.25) -- (6,3.45);  
\node at (6.2,2.5) {\tiny $p_j$}; \node at (8,2.9) {\tiny $c_j$}; 
 \node at (5,0.5) {\tiny $\tilde r_{j,0}$};
\draw[-, draw=black] (5,1) -- (7,1.2); \draw[-, draw=black] (5,1.4) -- (7,1.2); 
\draw[-, draw=black] (5,0.9) -- (5,1.1); \draw[-, draw=black] (5,1.3) -- (5,1.5);
\node at (7,0.5) {\tiny $\tilde c_j$};
\draw[-, draw=black] (6,0.98) -- (6,1.15); \draw[-, draw=black] (6,1.25) -- (6,1.42);
\node at (5.8,0.5) {\tiny $r_{j,1}$};
\draw[-, draw=black] (6.5,1.1) -- (6.5,1.32); 
\node at (6.5,0.5) {\tiny $r_{j,2}$};
\draw[->, draw=blue] (5,2.75) .. controls (4.5, 2) .. (5,1.15); \node at (4.4,1.9) {\tiny $\phi$};
\draw[->, draw=blue] (5,3.25) .. controls (4.5, 2.4) .. (5,1.55); \node at (4.4,2.3) {\tiny $\phi$};
\draw[->, draw=blue] (8,2.8) -- (7,1.3);   \node at (7.3,2.1) {\tiny $\phi$}; 
\draw[->, draw=blue] (6,2.9) .. controls (5.5, 2) .. (6.5,1.15); \node at (5.5,1.9) {\tiny $\phi$};
\draw[->, draw=blue] (6,3.25) .. controls (5.5, 2.4) .. (6.5,1.35); \node at (5.5,2.3) {\tiny $\phi$};
\node at (11.5,4.5) {\tiny $\underline{\Lambda = \infty}$};
\node at (11.5,3.6) {\tiny $W \setminus G_j$};\node at (11,1.2) {\tiny $\tilde W$};
\draw[-, draw=black] (10,3) -- (11,3); \draw[-, draw=black] (11,2.9) -- (11,3.1);
\draw[-, draw=black] (10,2.9) -- (10,3.1);  
\node at (11,2.5) {\tiny $p_j$}; \node at (3,2.5) {\tiny $\hat p_j$};  
\draw[-, draw=black] (10,1) -- (12,1); \draw[-, draw=black] (12,0.9) -- (12,1.1);
\draw[-, draw=black] (10,0.9) -- (10,1.1);  \node at (12,0.5) {\tiny $\tilde c_j$};
\node at (10,0.5) {\tiny $\tilde r_{j,0}$};
\draw[->, draw=blue] (10,2.3) -- (10,1.3);  \node at (9.8,1.9) {\tiny $\phi$};
\draw[->, draw=blue] (11,2.3) -- (12,1.3);   \node at (11.2,1.9) {\tiny $\phi$};
\end{tikzpicture}
\caption{The Knaster case: the arcs $W$ and their images under $\phi$.
The labels refer to the $\pi_0$-images of the points.}
\label{fig:phiK}
\end{center}
\end{figure} 

\item[(3.4)] If $\Lambda = \infty$ and $x \in Z^*_j \cup G_j$, then let $\tilde W$ 
be the component of $\tilde X$ with boundary point $e_j$ and 
$\tilde\pi_0(\tilde W) = [\tilde r_{j,0}, \tilde c_j)$. Let $\phi:Z^*_j \to \tilde W$ 
be such that 
$\tilde\pi_0 \circ \phi \circ \pi_0^{-1}:[r_{j,0}, p_j) \to [\tilde r_{j,0}, \tilde c_j)$ 
is affine and $\phi(x) = e_j$ if $x \in G_j$.
\end{itemize}

By construction, $\phi : W \to \tilde W$ is always a homeomorphism for all the components 
$W$ in this construction, and their union together with $\bigcup_{j=0}^{N-1}G_j$ is the entire 
$X$, just as the union of all $\tilde W$ together with $\bigcup_{j=0}^{N-1}\{ e_j\}$ is the entire 
$\tilde X$. Therefore $\phi$ is onto and one-to-one on $\tilde X \setminus \bigcup_{j=0}^{N-1} G_j$.

The construction of $\phi:W \to \tilde W$ essentially depends only on the value of $\Lambda$, 
in the sense that without loss of generality,
\begin{equation}\label{eq:zzz}
\tilde\pi_0 \circ \phi \circ \pi^{-1}:\pi_0(W) \to \tilde\pi_0(\tilde W)
\end{equation}
can be chosen to depend only on $\Lambda = \Lambda(W)$.
This makes $\phi$ continuous on $X \setminus \bigcup_{j=0}^{N-1}(G_j \cup Z_j)$.
Finally, in the non-Knaster case, 
$W \to Z_j \cup G_j$ in Hausdorff metric if and only if $\Lambda(W) \to \infty$
and $\Lambda(W) \pmod N = j$.
Recall that we specified $\phi|_{Z_j}$ to have the 
limit dynamics of $\phi|_W$ for $W \to Z_j \cup G_j$, thus achieving
continuity of $\phi$ on each $Z_j \cup G_j$.

In the Knaster case, $\diam(\phi(W)) \to 0$ as $W \to G_j$, so here $\phi$ is also continuous 
on each $G_j$.
\end{proof}

\begin{cor}
Given a homeomorphism $h: X \to X$, the map $\tilde h := \phi \circ h \circ \phi^{-1}$ 
is a well-defined homeomorphism on $\IL$.
\end{cor}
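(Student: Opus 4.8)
The plan is to give the formally ambiguous expression $\phi\circ h\circ\phi^{-1}$ its correct meaning and then read off all of its properties from the single genuinely topological input, namely that a homeomorphism of $X$ cannot mix up the continua $G_j$. I would first record two facts. \textbf{(a)} From Theorem~\ref{thm:cont} and its proof --- in particular cases (2.3) and (3.4), where the arc $\tilde W$ has $e_j$ only as a boundary point, so that $e_j\notin\phi(W)$ --- together with $\phi(G_j)=e_j$ and the fact that $\phi$ is onto and one-to-one on $X\setminus\bigcup_j G_j$, one gets $\phi^{-1}(e_j)=G_j$ for each $j$, and $\phi$ restricts to a bijection of $X\setminus\bigcup_{j=0}^{N-1}G_j$ onto $\tilde X\setminus\{e_0,\dots,e_{N-1}\}$. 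Equivalently, the sets $\phi^{-1}(z)$, $z\in\tilde X$, are exactly the singletons contained in $X\setminus\bigcup_j G_j$ and the continua $G_0,\dots,G_{N-1}$.

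\textbf{(b)} The $G_j$ are precisely the maximal (under inclusion) proper subcontinua of $X$ that are neither arcs nor points, as stated at the head of this section. A homeomorphism carries subcontinua to subcontinua, preserving properness, the property of being an arc or a point, and the inclusion order; hence $h$ permutes the finite collection $\{G_0,\dots,G_{N-1}\}$. I would write $h(G_j)=G_{\rho(j)}$ for the induced permutation $\rho$ of $\{0,\dots,N-1\}$, so that $h$ maps $\bigcup_j G_j$ onto itself and $X\setminus\bigcup_j G_j$ onto itself.

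With (a) and (b) in hand the rest is soft point-set topology. Since $\phi$ is a continuous surjection from the compact space $X$ onto the Hausdorff space $\tilde X$, it is a closed map, hence a quotient map. The composition $\phi\circ h:X\to\tilde X$ is continuous and constant on each set $\phi^{-1}(z)$: trivially on the singletons, and on $G_j$ it is identically $\phi(h(G_j))=\phi(G_{\rho(j)})=e_{\rho(j)}$ by (b). Therefore $\phi\circ h$ factors through $\phi$, giving a unique continuous map $\tilde h:\tilde X\to\tilde X$ with $\tilde h\circ\phi=\phi\circ h$. This $\tilde h$ is exactly what $\phi\circ h\circ\phi^{-1}$ denotes: on $\tilde X\setminus\{e_0,\dots,e_{N-1}\}$, where $\phi$ is invertible by (a), it literally equals $\phi\circ h\circ\phi^{-1}$, while $\tilde h(e_j)=e_{\rho(j)}$ is the value forced by $G_j\mapsto h(G_j)=G_{\rho(j)}\mapsto e_{\rho(j)}$; so $\phi\circ h\circ\phi^{-1}$ is well defined. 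Applying the same construction to $h^{-1}$ (which permutes the $G_j$ via $\rho^{-1}$) produces a continuous $g:\tilde X\to\tilde X$ with $g\circ\phi=\phi\circ h^{-1}$; then $g\circ\tilde h\circ\phi=g\circ\phi\circ h=\phi\circ h^{-1}\circ h=\phi$, and since $\phi$ is onto, $g\circ\tilde h=\mathrm{id}_{\tilde X}$, and symmetrically $\tilde h\circ g=\mathrm{id}_{\tilde X}$. Hence $\tilde h$ is a homeomorphism of $\tilde X$ with inverse $g$. (Alternatively, once $\tilde h$ is a continuous bijection it is automatically a homeomorphism, $\tilde X$ being compact Hausdorff.)

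The step I expect to carry the real weight is (b): everything else is routine once we know $h$ is \emph{forced} to permute the $G_j$, and that forcing rests entirely on $\{G_0,\dots,G_{N-1}\}$ being the topologically characterized collection of maximal proper non-arc, non-point subcontinua of $X$, hence a homeomorphism invariant. The secondary delicate point is continuity of $\tilde h$ at the collapsed endpoints $e_j$, which resists a direct $\eps$-$\delta$ estimate but drops out uniformly once $\phi$ is treated as a quotient map; note in particular that the ``compact Hausdorff'' shortcut cannot be invoked until well-definedness and continuity of $\tilde h$ have already been established.
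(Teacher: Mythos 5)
Your proof is correct, and it reaches the conclusion by a genuinely different route from the paper's at the one point where there is real work to do, namely continuity of $\tilde h$ at the collapsed endpoints. The paper argues this directly: it fixes the permutation $e_j \mapsto e_i$ induced by $h(G_j)=G_i$ and then chases neighborhoods, showing that components $W$ close to $G_j$ have large $\Lambda(W)$ and that their images $\phi(W)$ consequently meet a given neighborhood of $e_i$ only when $\Lambda(W)$ is large, so small neighborhoods of $e_j$ go into small neighborhoods of $e_i$. You instead identify the fibers of $\phi$ (singletons off $\bigcup_j G_j$, and the $G_j$ themselves), observe that a continuous surjection from a compact space onto a Hausdorff space is closed and hence a quotient map, and let the universal property of quotients deliver well-definedness and continuity of $\tilde h$ simultaneously; the inverse is then obtained by running the same factorization for $h^{-1}$. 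Both arguments rest on the same essential input --- that $h$ must permute $\{G_0,\dots,G_{N-1}\}$ because these are the maximal proper subcontinua of $X$ that are neither arcs nor points, a point you justify more explicitly than the paper does. What the paper's hands-on argument buys is independence from a precise description of the fibers and a concrete picture of how neighborhoods of $G_j$ behave under $\phi$; what your argument buys is brevity, robustness (it would survive any modification of the construction of $\phi$ that preserves the fiber structure), and an explicit treatment of why $\tilde h$ is a homeomorphism rather than merely a continuous bijection, which the paper leaves to the standard compact-Hausdorff fact. Your closing caveat about the order of quantifiers --- that the compact-Hausdorff shortcut is unavailable before $\tilde h$ is known to be a well-defined continuous map --- is well taken.
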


\begin{proof}
The end-points $e_j$ are the only points where $\phi^{-1}$ is not single-valued.
Therefore $\tilde h$ is well-defined on $\IL \setminus \{ e_0, \dots, e_{N-1}\}$.
However, since $h$ is a homeomorphism, it has to permute the subcontinua $G_j$,
and therefore $h \circ \phi^{-1}(e_j) = G_i$ for some $i$
and $\phi \circ h \circ \phi^{-1}(e_j) = e_i$.
Therefore $\tilde h$ is defined (in a single-valued way) also at the 
endpoints, and it permutes
them by the same permutation as $h$ permutes the subscontinua $G_j$.

Since $\phi$ is continuous and injective outside $G_j$, $\tilde h$ is 
continuous on $\tilde X \setminus \{ e_0, \dots, e_{N-1}\}$.
To conclude continuity of $\tilde h$ at the endpoints $e_j$,
observe that for every $\Lambda_0 \in \N$ there is a neighborhood 
$U$ of $G_j$ such that $\pi_0^{-1}(J) \cap W \subset U$ only if 
$\Lambda(W) \geq \Lambda_0$ (where the components $W$ and there maximal levels
$\Lambda(W)$ are as in the proof of Theorem~\ref{thm:cont}).
On the other hand, for every neighborhood $\tilde U$ of $e_i = \tilde h(e_j)$,
$\phi(W) \cap \tilde\pi_0^{-1}(\tilde c_i) \cap \tilde U \neq \emptyset$ 
only if $\Lambda(W)$ is sufficiently large.
Therefore $\tilde h$ maps small neighborhood of $e_j$ into small 
neighborhoods of $e_i$, and the continuity of $\tilde h$ at 
$\{ e_0, \dots, e_{N-1}\}$ follows.
\end{proof}

Recall that $\il ([0, 1], Q) = C \cup X$, where $X = \ILQ$ and $C$ is the ray 
containing $\bar 0$ that compactifies on $X$. 
Analogously, $\il ([0, 1], T) = \tilde C \cup \tilde X$. 
\begin{rem}\label{rem:H}
Let $H : \il ([0, 1], Q) \to \il ([0, 1], Q)$ be a homeomorphism. In a straightforward way it is 
possible to expand our construction of $\phi : X \to \tilde X$ to get the continuous map 
$\Phi : \il ([0, 1], Q) \to \il ([0, 1], T)$ such that $\Phi|_X = \phi$ and the map 
$\tilde H := \Phi \circ H \circ \Phi^{-1}$ is a well-defined homeomorphism on $\il ([0, 1], T)$.
Obviously $\tilde H|_{\tilde X} = \tilde h$.
\end{rem}
In \cite{BS} we have proved that every homeomorphism on  $\il ([0, 1], T)$ is isotopic to some power
of the shift map. Therefore, $\tilde H$ is isotopic to $\sigma^R$ for some $R \in \Z$. Since $\Phi$ 
is injective on $\il ([0, 1], Q) \setminus (\bigcup_{i=0}^{N-1} G_i)$ and 
$\Phi \circ H = \tilde H \circ \Phi$, $H$ restricted to 
$\il ([0, 1], Q) \setminus (\bigcup_{i=0}^{N-1} G_i)$ is pseudo-isotopic to $\sigma^R$, and $H$ permutes 
the $G_i$s in the same way as $\sigma^R$. 

\section{Isotopy}\label{sec:I}

Let $\ch_k$ denote a natural chain of $\il ([0, 1], Q)$. Then, by construction of $\Phi$, $\Phi(\ch_k)$ 
is a natural chain of $\il ([0, 1], T)$. Also, for every $k, l \in \N$, $\Phi$ maps the salient $k$-point 
of $C$ of $k$-level $l$ to the salient $k$-point of $\tilde C$ of the same $k$-level $l$.

Let $p, q$ be such that $H(\ch_p) \prec \ch_q$. Then, by construction of $\Phi$, 
$\tilde H(\Phi(\ch_p)) \prec \Phi(\ch_q)$. Let $\{ t_i : i \in \N \}$ denote all salient $p$-points of 
$C$ and $\{ s_i : i \in \N \}$ all salient $p$-points of $\tilde C$. Let $\{ t'_i : i \in \N \}$ denote 
all salient $q$-points of $C$ and $\{ s'_i : i \in \N \}$ all salient $q$-points of $\tilde C$.

Let $A_i$ be the maximal $p$-link-symmetric arc centered at $t_i$. Since $A_i$ is $p$-link-symmetric, 
and $H(\ch_{p}) \preceq \ch_{q}$, the image  $D_i := H(A_i)$ is $q$-link-symmetric and therefore has 
a well-defined central link  $\ell_q$, and a well-defined  center, we denote it as $m'_i$. In fact, 
$H(t_i)$ and $m'_i$ belong to the central link  $\ell_p$ and $m'_i$ is the $q$-point with the highest
$q$-level of all $q$-points of the arc component of $\ell_p$ which contains $H(t_i)$. We will write that 
$H(d) \approx b$ if $H(d)$ and $b$ belong not only to the same link, but to the same arc-component of 
that link. Thus $H(t_i) \approx m'_i$. To prove our main theorem we should show the following lemma:

\begin{lem}\label{shift}
There exists $R \in \Z$ such that $m'_i = t'_{i+R}$ for all sufficiently large integers $i \in \N$.
\end{lem}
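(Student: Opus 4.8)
The plan is to transfer the corresponding statement for tent maps, proved in \cite{BS}, through the semiconjugacy $\Phi$. Recall from Section~\ref{sec:PI} that $\tilde H := \Phi \circ H \circ \Phi^{-1}$ is a homeomorphism of $\il([0,1],T)$, that $\tilde H$ is isotopic to $\sigma^R$ for some $R \in \Z$, and that $\Phi$ sends the salient $p$-point of $C$ of $p$-level $l$ to the salient $p$-point of $\tilde C$ of the same $p$-level, so $\Phi(t_i) = s_i$ and $\Phi(t'_i) = s'_i$ for all $i$. First I would record the tent-map analogue: by the results of \cite{BS} (which the present $\tilde H$ satisfies, being a homeomorphism of a tent inverse limit), if $\tilde A_i$ is the maximal $p$-link-symmetric arc of $\tilde C$ centered at $s_i$, then its image $\tilde H(\tilde A_i)$ is $q$-link-symmetric with center $\tilde m'_i$, and there exists $R \in \Z$ (the same $R$ as in the isotopy $\tilde H \simeq \sigma^R$) with $\tilde m'_i = s'_{i+R}$ for all sufficiently large $i$. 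This is essentially the content of the ``shift lemma'' from \cite{BS}, applied to $\tilde H$.

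Next I would show that $\Phi$ carries the combinatorial data of $A_i$ and $D_i = H(A_i)$ to that of $\tilde A_i$ and $\tilde H(\tilde A_i)$. The key point is that $\Phi$ maps the natural chain $\ch_p$ of $\il([0,1],Q)$ to the natural chain $\Phi(\ch_p)$ of $\il([0,1],T)$ (stated at the start of Section~\ref{sec:I}), and it is injective off $\bigcup G_i$; hence for an arc $A \subset C$ disjoint from $\bigcup G_i$, the sequence of links of $\ch_p$ visited by $A$ equals the sequence of links of $\Phi(\ch_p)$ visited by $\Phi(A)$. Since $C$ itself is disjoint from $\bigcup_j G_j$ (the ray $C$ compactifies on $X$, and the $G_j \subset X$), every arc $A_i \subset C$ is disjoint from $\bigcup G_j$, so $\Phi(A_i)$ is exactly the maximal $p$-link-symmetric arc of $\tilde C$ centered at $\Phi(t_i) = s_i$, i.e. $\Phi(A_i) = \tilde A_i$. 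Applying $\Phi \circ H = \tilde H \circ \Phi$ gives $\Phi(D_i) = \tilde H(\tilde A_i)$; and since $\Phi$ preserves link-sequences and the $q$-level of $q$-points (it is injective on $C$ and respects $\ch_q$), the center $m'_i$ of $D_i$ is mapped to the center $\tilde m'_i$ of $\tilde H(\tilde A_i)$. Concretely, $\Phi(m'_i) = \tilde m'_i$, and likewise $\Phi(t'_{i+R}) = s'_{i+R}$. Combining with the tent-map identity $\tilde m'_i = s'_{i+R}$ yields $\Phi(m'_i) = \Phi(t'_{i+R})$ for all large $i$, and injectivity of $\Phi$ on $C$ forces $m'_i = t'_{i+R}$.

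The one subtlety to address carefully, and the step I expect to be the main obstacle, is the bookkeeping around the subcontinua $G_j$ and the images $D_i = H(A_i)$. The arcs $A_i \subset C$ cause no trouble, but their $H$-images $D_i$ may a priori come close to, or interact combinatorially with, the $G_j$ (and $\Phi$ is not injective there). What one needs is that, since $H$ permutes the $G_j$ and $A_i$ is a bounded arc in the ray $C$, $D_i$ stays in a region where $\Phi$ is injective, or at least that its link-itinerary in $\ch_q$ is faithfully recorded by $\Phi$; here one uses that $D_i \subset H(C)$ and that $H(C)$ is again a ray disjoint from the $G_j$'s (a homeomorphism maps arc-components to arc-components, and $C$ is the unique arc-component that is a non-degenerate ray limiting on the core), so $D_i \cap \bigcup_j G_j = \emptyset$ and $\Phi|_{D_i}$ is injective. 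With this in hand the identification $\Phi(D_i) = \tilde H(\tilde A_i)$ and $\Phi(m'_i) = \tilde m'_i$ is clean, and the lemma follows. I would also note that the ``sufficiently large $i$'' is inherited directly from \cite{BS}: the finitely many small-$i$ salient points near $\bar 0$ (resp.\ near the endpoint of $\tilde C$) are where the link-symmetric arcs are too short for the asymptotic matching, and $\Phi$ preserves this initial segment, so no new exceptions are introduced.
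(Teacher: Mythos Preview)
Your proposal is correct and follows essentially the same route as the paper: both transfer the known tent-map result (the paper cites \cite[Theorem~4.12]{S} and \cite[Theorem~4.1]{BBS} rather than \cite{BS}) through the semiconjugacy $\Phi$, using $\Phi(t_i)=s_i$, $\Phi(t'_i)=s'_i$, and $\tilde H\circ\Phi=\Phi\circ H$ to pull back $\tilde H(s_i)\approx s'_{i+R}$ to $H(t_i)\approx t'_{i+R}$. Your version is more explicit about tracking the link-symmetric arcs $A_i$, $D_i$ and their centers under $\Phi$, and you spell out why $D_i\subset H(C)=C$ is disjoint from the $G_j$ (a point the paper leaves implicit), but the underlying argument is the same.
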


The analogous statement 
for the tent map inverse limit spaces is Theorem 4.1 in \cite{BBS}, and its proof relies on properties 
of tent maps. Therefore, we will provide here a new proof for the inverse limits of quadratic maps. 

\begin{proof}
	Let $\Phi$ and $H$ be as in Remark \ref{rem:H}. Let $R$ be such that $\tilde H$ is isotopic to 
	$\sigma^R$.	Recall that $\Phi$ maps the salient $p$-point of $C$ of $p$-level $i$ to the salient 
	$p$-point of $\tilde C$ of the same $p$-level $i$, that is $\Phi(t_i) = s_i$, for all $i \in \N$, 
	and analogously for salient $q$-points, $\Phi(t'_i) = s'_i$, for all $i \in \N$. Since by 
	\cite[Theorem 4.12]{S} 
	and \cite[Theorem 4.1]{BBS}, $\tilde H(s_i) \approx s'_{i+R}$ for all $i$ sufficiently large, and 
	$\tilde H \circ \Phi = \Phi \circ H$, it follows that $H(t_i) \approx t'_{i+R}$ for all sufficiently 
	large $i \in \N$.
\end{proof}

To finish the proof of the main theorem we will invoke statements from \cite{BBS} and \cite{BS}. 
Although all of them are stated for tent maps inverse limits, their proofs work in the same way for the
inverse limits of quadratic maps.

In the same way as in the proof of \cite[Proposition 4.2]{BBS}, but using Lemma \ref{shift} instead of
\cite[Theorem 4.1]{BBS}, it follows that for every 
$p$-point $x \in C$ of $p$-level $i$ there exists a $q$-point $x'\in C$ of $q$-level 
$i+R$ such that $H(x) \approx x'$, for all sufficiently large integers $i \in \N$. Also, in the 
same way as in the proof of \cite[Theorem 1.3]{BBS}, it follows that the homeomorphism 
$H : \il ([0, 1], Q) \to \il ([0, 1], Q)$ is pseudo-isotopic to $\sigma^R$.

Note that the only places in the proofs of \cite{BS} that rely on properties of tent maps are those
where Theorem 4.1, Proposition 4.2 or Theorem 1.3 from \cite{BBS} are cited. Having now the analogous results proved for the quadratic family, the proof of our main theorem follows from \cite{BS}.

\section{Application to entropy}\label{sec:E}

In  \cite[Theorem 1.1]{BS1} we proved that for any self-homeomorphism 
$h : \il ([0, 1], T_s) \to \il ([0, 1], T_s)$, of the inverse limit space 
of a tent map with slope $s \in (\sqrt2, 2]$, the 
topological entropy $\htop(h) = |R| \log s$, where $R \in \Z$ is such that $h$ is isotopic 
to $\sigma^R$.

We call the fact that the homeomorphisms on a space $X$ can only take specific values 
{\em entropy rigidity}.  

If a quadratic map $Q_a$ has no non-trivial periodic intervals (that is, $Q_a$ is not
renormalizable), then the same theorem applies to a homeomorphism on its inverse limit 
space. Otherwise, the first return map to such a periodic interval is a new unimodal map, 
called the {\em renormalization} of the previous; thus we get a (possibly infinite)
sequence of nested cycles of periodic intervals, with periods $(p_i)_{i \ge 0}$, where $p_i$ 
divides $p_{i+1}$ and $p_0 = 1$. The effect of renormalization 
on the structure of the inverse limit space is well understood \cite{BaDi3}: The core of 
the inverse limit of $Q_a$ has $p_1$ proper subcontinua $G_k$, $k = 0, \dots , p_1-1$,
which are permuted cyclically by the shift homeomorphism, and each of them is homeomorphic to 
the inverse limit space of the renormalization. Outside $\cup_k G_k$, the core inverse limit
space has no other subcontinua than points and arcs. Hence, each homeomorphism $H$ of 
$\il ([0, 1], Q_a)$ can at most permute the subcontinua $G_k$, $k = 0, \dots , p_1-1$, in 
some way (isotopically to $\sigma^R$). 
In the proof of \cite[Theorem 1.2]{BS1} the erroneous suggestion was made, that within $G_k$, $k = 0, \dots , p_1-1$, 
the homeomorphism $H$ need not act isotopically to $\sigma^R$, 
and that ``it can still be arranged that $H$ maps $G_k$ to $G_{k+R \bmod p_1}$ isotopically 
to some other(!) power of $\sigma$'' (\cite[p.\ 999]{BS1}). However, the main result 
of this paper, Theorem \ref{main}, shows that this can not happen (hence, the rest 
of the proof of \cite[Theorem 1.2]{BS1} is superfluous). This leads to the corrected 
version of \cite[Theorem 1.2]{BS1}:
\begin{theorem}\label{thm:entropy-new}
Assume that $Q$ is a quadratic map with positive topological entropy and $\log s = \htop(Q)$.
	If $H$ is a homeomorphism on the inverse limit space $\il ([0, 1], Q)$, 
	then the topological entropy $\htop(H) = |R|\log s$,
	where $R \in \Z$ is such that $H$ is isotopic to $\sigma^R$.
\end{theorem}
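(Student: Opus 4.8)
The plan is to derive Theorem~\ref{thm:entropy-new} from Theorem~\ref{main} together with the tent-map entropy result \cite[Theorem 1.1]{BS1} and the pseudo-isotopy/renormalization machinery developed above. By Theorem~\ref{main} there is $R \in \Z$ with $H$ isotopic to $\sigma^R$; since isotopic homeomorphisms have the same topological entropy, it suffices to compute $\htop(\sigma^R)$ on $\il([0,1],Q)$ and to show it equals $|R|\log s$.

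\medskip

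\noindent\textbf{Step 1: Reduce to the core.} The space $\il([0,1],Q) = C \cup X$ where $C$ is a ray (a one-ended arc-component) compactifying on the core $X$. A ray carries no entropy, and $X$ is the non-wandering part; so $\htop(\sigma^R) = \htop(\sigma^R|_X)$. This is a standard argument (the chain-recurrent set of $\sigma$ is contained in $X$, and entropy is concentrated on the chain-recurrent set), but I would state it cleanly as a lemma, perhaps citing \cite{BS1} where the analogous reduction is made.

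\medskip

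\noindent\textbf{Step 2: Handle the renormalization structure.} If $Q$ is not renormalizable, then $X$ contains no proper subcontinua other than points and arcs, and by the conjugacy of \cite{MT} (valid since $Q$ is then non-renormalizable with no attracting cycle) $\il([0,1],Q)$ is homeomorphic to $\il([0,1],T_s)$ in a way that intertwines the shift maps; then $\htop(\sigma^R) = |R|\log s$ directly from \cite[Theorem 1.1]{BS1}. If $Q$ is renormalizable, the core has $p_1$ subcontinua $G_k$ permuted cyclically by $\sigma$, each homeomorphic (as a dynamical system under $\sigma^{p_1}$) to the inverse limit of the renormalization $Q^{p_1}|_{J_0}$, which has entropy $p_1 \htop(Q)$ (renormalization multiplies entropy by the period). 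Since $\sigma^R$ permutes the $G_k$ and $\sigma^{R}$ restricted to the union $\bigcup_k G_k$ decomposes over the cycles of this permutation, $\htop(\sigma^R|_{\cup G_k})$ reduces to $\htop$ of a return map, which by induction on the renormalization depth equals $|R|\log s$. Outside $\bigcup_k G_k$ the core has only arcs and points, contributing no entropy, and one must check no entropy is created "in the limit" as pieces accumulate on $\bigcup_k G_k$; this is where the uniform-continuity estimates from the construction of $\phi$ (the dependence of $\phi|_W$ only on $\Lambda(W)$, and $\diam(\phi(W)) \to 0$ in the Knaster case) are the natural tools, and alternatively one can invoke that $\sigma^R$ is isotopic to $\sigma^R$ acting on $\il([0,1],T_s)$ up to the collapsing map $\Phi$, which changes entropy by a bounded-to-one factor that does not affect it.

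\medskip

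\noindent\textbf{The main obstacle} is the bookkeeping in the renormalizable (and especially infinitely renormalizable) case: making precise that the entropy of $\sigma^R$ on the core equals $|R|$ times the entropy of $\sigma$ on the core, given that $\sigma$ permutes the $G_k$ nontrivially, and that all of this entropy already lives at the top level $\log s = \htop(Q)$ rather than being amplified by passing into deeper renormalization windows. The clean way is: $\htop(\sigma|_{\il([0,1],Q)}) = \htop(Q) = \log s$ (entropy of the shift homeomorphism equals that of the base map, a classical fact), hence $\htop(\sigma^R) = |R|\htop(\sigma) = |R|\log s$, the factor $|R|$ coming from $\htop(f^n) = |n|\htop(f)$ for homeomorphisms. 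With Theorem~\ref{main} in hand this collapses to two standard entropy identities, so the real content — the identification $R$-isotopy class — is exactly what the rest of the paper supplies.
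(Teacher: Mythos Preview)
Your proposal has a genuine gap: the claim that ``isotopic homeomorphisms have the same topological entropy'' is false in general (on a disk every homeomorphism is isotopic to the identity yet may have arbitrarily large entropy), and it is precisely the content of the theorem, not a hypothesis you may invoke. The identity $\htop(\sigma^R)=|R|\htop(\sigma)=|R|\log s$ is indeed elementary, but it does \emph{not} follow that $\htop(H)=\htop(\sigma^R)$ merely from Theorem~\ref{main}; your final paragraph, which asserts that ``this collapses to two standard entropy identities,'' is therefore circular.

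The paper's proof is simply the remark that the argument of \cite[Theorem~1.1]{BS1} for tent inverse limits carries over verbatim once Theorem~\ref{main} supplies the integer $R$. That argument does not use isotopy as a black box: it exploits the \emph{pseudo}-isotopy established along the way, namely that for every $\eps>0$ there is a chain $\ch_p$ of mesh $<\eps$ such that $H(x)$ and $\sigma^R(x)$ lie in the same arc-component of the same link for all $x$. This link-level control gives the upper bound $\htop(H)\le |R|\log s$ via a counting argument on chain covers, while the lower bound comes from a factor/semi-conjugacy argument onto a subsystem carrying entropy $|R|\log s$. Your Step~2 sketch (induction over the renormalization tower, no entropy outside $\bigcup_k G_k$) touches some of the right ingredients for the lower bound, but you still need the chain-based upper bound, and for that you must go back to the pseudo-isotopy rather than the bare isotopy statement.
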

The theorem has the same form as \cite[Theorem 1.1]{BS1} and can be proved in an analogous way.

\medskip
\noindent
Faculty of Mathematics, University of Vienna\\
Oskar Morgensternplatz 1, 1090 Wien, Austria\\
\texttt{henk.bruin@univie.ac.at}\\
\texttt{http://www.mat.univie.ac.at/}$\sim$\texttt{bruin/}\\[3mm]
Department of Mathematics, Faculty of Science, University of Zagreb\\
Bijeni\v cka 30, 10 000 Zagreb,
Croatia\\
\texttt{sonja@math.hr}\\
\texttt{http://www.math.hr/}$\sim$\texttt{sonja}

\end{document}